\newtheorem{thm}{Theorem}[section]
\newtheorem{prop}[thm]{Proposition}
\newtheorem{lem}[thm]{Lemma}
\newtheorem{lem-def}[thm]{Lemma-Definition}
\newtheorem{cor}[thm]{Corollary}
\theoremstyle{remark}
\newtheorem{rmk}{Remark}[section]
\theoremstyle{definition}
\newtheorem{ques}[thm]{Question}
\numberwithin{equation}{section}
\newcommand{\nc}{\newcommand}
\nc{\on}{\operatorname}
\newcommand{\frakb}{{\mathfrak b}}
\newcommand{\frakg}{{\mathfrak g}}
\newcommand{\frakn}{{\mathfrak n}}
\newcommand{\frakt}{{\mathfrak t}}
\newcommand{\bA}{{\mathbb A}}
\newcommand{\bC}{{\mathbb C}}
\newcommand{\bD}{{\mathbb D}}
\newcommand{\bG}{{\mathbb G}}
\newcommand{\bP}{{\mathbb P}}
\newcommand{\bR}{{\mathbb R}}
\newcommand{\bZ}{{\mathbb Z}}
\newcommand{\calF}{{\mathcal F}}
\newcommand{\calL}{{\mathcal L}}
\newcommand{\calO}{{\mathcal O}}
\newcommand{\calT}{{\mathcal T}}
\newcommand{\calU}{{\mathcal U}}
\nc{\al}{{\alpha}} \nc{\be}{{\beta}} \nc{\ga}{{\gamma}}
\nc{\ve}{{\varepsilon}} \nc{\Ga}{{\Gamma}} \nc{\la}{{\lambda}}
\nc{\La}{{\Lambda}}
\nc{\ad}{{\on{ad}}}
\nc{\aff}{{\on{aff}}}
\nc{\Aff}{{\mathbf{Aff}}}
\newcommand{\Aut}{{\on{Aut}}}
\nc{\Bun}{{\on{Bun}}}
\nc{\der}{{\on{der}}}
\newcommand{\Der}{{\on{Der}}}
\nc{\diag}{{\on{diag}}}
\newcommand{\End}{{\on{End}}}
\nc{\Fl}{{\calF\ell}}
\newcommand{\Hom}{{\on{Hom}}}
\nc{\Hol}{{\on{Hol}}}
\nc{\Id}{{\on{Id}}}
\nc{\Ind}{{\on{Ind}}}
\newcommand{\Res}{{\on{Res}}}
\nc{\res}{{\on{res}}}
\nc{\tr}{{\on{tr}}}
\newcommand{\GL}{{\on{GL}}}
\nc{\GSp}{{\on{GSp}}} \nc{\GU}{{\on{GU}}} \nc{\SL}{{\on{SL}}}
\nc{\SU}{{\on{SU}}} \nc{\SO}{{\on{SO}}}
\nc{\Sol}{{\on{Sol}}}
\nc{\Ann}{{\on{Ann}}}
\nc{\codim}{{\on{codim}}}
\nc{\four}{{\calF our}}
\newcommand{\fg}{\mathfrak{g}}
\newcommand{\ft}{\mathfrak{t}}
\newcommand{\HH}{\mathbb{H}}
\newcommand{\CC}{\mathbb{C}}
\newcommand{\C}{\CC}
\newcommand{\PP}{\mathbb{P}}
\renewcommand{\P}{\PP}
\newcommand{\G}{\mathbb{G}}
\newcommand{\ZZ}{\mathbb Z}
\newcommand{\Z}{\ZZ}
\newcommand{\into}{\hookrightarrow}
\newcommand{\ra}{\rightarrow}
\newcommand{\br}{\buildrel}
\newcommand{\cF}{{\mathcal F}}
\newcommand{\cN}{{\mathcal N}}
\newcommand{\cO}{{\mathcal O}}
\newcommand{\cY}{{\mathcal Y}}
\def\question#1{{}}
\newcommand{\quash}[1]{}  %%Anything in \quash is ignored
\author{An Huang, Bong H. Lian, Shing-Tung Yau, and Xinwen Zhu}
\title{Chain Integral Solutions to Tautological Systems}
\date{August 3, 2015}
\begin{document}

\maketitle
\begin{abstract}
We give a new geometrical interpretation of the local analytic solutions to a differential system, which we call a tautological system $\tau$, arising from the universal family of Calabi-Yau hypersurfaces $Y_a$ in a $G$-variety $X$ of dimension $n$. First, we construct a natural topological correspondence between relative cycles in $H_n(X-Y_a,\cup D-Y_a)$ bounded by the union of $G$-invariant divisors $\cup D$ in $X$ to the solution sheaf of $\tau$, in the form of chain integrals. Applying this to a toric variety with torus action, we show that  in addition to the period integrals over cycles in $Y_a$, the new chain integrals generate the full solution sheaf of a GKZ system. This extends an earlier result for hypersurfaces in a projective homogeneous variety, whereby the chains are cycles \cite{BHLSY,HLZ}. In light of this result, the mixed Hodge structure of the solution sheaf is now seen as the MHS of $H_n(X-Y_a,\cup D-Y_a)$. In addition, we generalize the result on chain integral solutions to the case of general type hypersurfaces. This chain integral correspondence can also be seen as the Riemann-Hilbert correspondence in one homological degree.  Finally, we consider interesting cases in which the chain integral correspondence possibly fails to be bijective.
\end{abstract}

\tableofcontents
\baselineskip=16pt plus 1pt minus 1pt
\parskip=\baselineskip

\pagenumbering{arabic}
\addtocounter{page}{0}
\markboth{\SMALL
A. Huang, B.H. Lian, S.-T. Yau, and X. Zhu}
{\SMALL Chain Integral Solutions to Tautological Systems}

%MSC 14D07

\section{Introduction}\label{Intro}

{\it Throughout this paper, we shall follow closely the notations introduced in \cite{HLZ}.} 
Let $G$ be a connected algebraic group over a field $k$ of characteristic zero.
Let $X$ be a projective $G$-variety of dimension $n$, and let $\calL$ be a very
ample $G$-linearized invertible sheaf over $X$ which gives rise to a
$G$-equivariant embedding
\[X\to \bP(V),\]
where $V=\Gamma(X,\calL)^\vee$.
Let $r=\dim V$. We assume that the action of $G$ on $X$ is locally effective, i.e. $\ker(G\to\Aut(X))$ is finite. Let $\bG_m$ be the multiplicative group acting on $V$ by homotheties. Let $\hat{G}=G\times\bG_m$, whose Lie algebra is $\hat{\frakg}=\frakg\oplus ke$, where $e$ acts on $V$ by identity.
 We denote by $Z:\hat{G}\to \GL(V)$ the corresponding group representation, and $Z:\hat{\frakg}\to \End(V)$ the corresponding Lie algebra representation. %In particular, $Z(e)$ is the Euler vector field on $V$. 
Note that under our assumptions, $Z:\hat{\frakg}\to \End(V)$ is injective.

Let $\hat{\imath}:\hat{X}\subset V$ be the cone of $X$, defined by
the ideal $I(\hat{X})$. Let $\beta:\hat{\frakg}\to k$ be a Lie algebra
homomorphism. Then a {\it tautological system} as defined in \cite{LSY}\cite{LY} is a cyclic $D$-module on $V^\vee$ given by
\[\tau\equiv\tau(G,X,\calL,\beta)=D_{V^\vee}/D_{V^\vee}J(\hat{X})+D_{V^\vee}(Z(\xi)+\beta(\xi), \xi\in \hat{\frakg}),\]
where
$$J(\hat{X})=\{\widehat{D}\mid D\in I(\hat{X})\}$$
is the ideal of the commutative subalgebra $\bC[\partial]\subset
D_{V^\vee}$ obtained by the Fourier transform of $I(\hat{X})$ (see \cite[\S A]{HLZ} for the review on the Fourier transform).% and in particular \eqref{appen:ring hom} for the notation).

Given a basis $\{a_i\}$ of $V$, we have $Z(\xi)=\sum_{ij}\xi_{ij}a_i\partial_{a_j}$, where $(\xi_{ij})$ is the matrix representing $\xi$ in the basis. Since the $a_i$ are also linear coordinates on $V^\vee$, we can view $Z(\xi)\in\Der k[V^\vee]\subset D_{V^\vee}$. In particular, the identity operator $Z(e)\in\End V$ becomes the Euler vector field on $V^\vee$. 

We briefly recall the main geometrical context that motivates our study of tautological systems. Let $X'$ be a compact complex manifold (not necessarily algebraic), such that the complete linear system of anticanonical divisors in $X'$ is base point free.
Let $\pi:\cY\ra B:=\Gamma(X',\omega_{X'}^{-1})_{sm}$ be the universal family of smooth CY hyperplane sections $Y_a\subset X'$, and let $\HH^{top}$ be the Hodge bundle over $B$ whose fiber at $a\in B$ is the line $\Gamma(Y_a,\omega_{Y_a})\subset H^{n-1}(Y_a)$, where $n=\dim X'$. In \cite{LY}, the period integrals of this family are constructed by giving a canonical trivialization of $\HH^{top}$. Let $\Pi=\Pi(X')$ be the period sheaf of this family, i.e. the locally constant sheaf generated by the period integrals (Definition 1.1 \cite{LY}.)

Integral solutions to holonomic differential systems go back to the classical theory of the Gauss hypergeometric equation in the form of the so-called Euler integrals. Many generalizations have since been found over the centuries. One notable class was the vast generalizations given by the celebrated GKZ hypergeometric systems \cite{GKZ1990,Adolphson} associated to algebraic tori and their rational modules. Euler type integral solutions to these systems have been constructed, and are integrals of multivalued meromorphic differential forms over `formal cycles', namely they are homology classes on the complement of an affine hypersurface in an algebraic torus with coefficient in a local system. This construction has also  been generalized later to hypergeometric systems associated to reductive algebraic groups and their rational modules \cite{Kapranov1997}. 

On the other hand, in recent decades period integrals of projective varieties have become central to the study of mirror symmetry and Hodge theory. As it is well-known, for the universal family of CY hypersurfaces in a given toric variety $X$, the GKZ system $\tau$ whose solutions include period integrals of the family, is never complete in the sense that its solution sheaf is always strictly larger than the period sheaf. While the latter is by construction geometrical in nature, physicists have conjectured that the larger solution sheaf too has a purely geometrical origin. In fact, they have shown in some examples that the solutions to $\tau$ in this case are integrals over topological chains with certain boundary conditions \cite{ADJ}, and they call these solutions `semi-periods' of the CY family.  In addition, period integrals over relative cycles have also arisen in another context in mirror symmetry, namely in the theory of open string theory \cite{KW2008,JS2009}. Here the relative cycles are chains bounded by certain distinguished algebraic curves (or `D-branes') in a CY threefold (see \cite{LLY2010} and for details), and they are the basic ingredients for enumerating open Gromov-Witten invariants in this setting.

In this paper, we show that the so-called semi-periods in physics are nothing but integrals over relative cycles with boundary on the $G$-invariant canonical divisor, and we do so for CY hyperplane sections in a general $G$-variety $X$. We also extend this result to general type hyperplane sections. In addition, we show that the chain integrals we have constructed do in fact exhaust all solutions when $X$ is a toric variety. Therefore, these chain integrals may also be viewed as a geometrical realization of the solutions to a GKZ system, as periods associated to relative cycles for families of algebraic varieties. We note that the chain integrals are defined here for families of projective varieties, and are therefore \`a priori different from the Euler type integrals in the GKZ theory, since the two are integrals over classes in different homology groups. But since both types of integrals solve the same differential system in the case in question, it would be very interesting to find a direct correspondence between the two constructions. This will be deferred to a future investigation.

We now return to the main geometrical set up of this paper. We shall assume that $X$ is an $n$-dimensional finite-orbit Fano smooth $G$-variety.  We denote by $\cup D$ the union of all $G$-invariant divisors in $X$ (which may be empty). Let $V=\Gamma(X,\omega_{X}^{-1})^\vee$, and we identify $X$ with the image of the natural map $X\to\P(V)$, and put $\calL=\calO_X(1)$. 
We shall consider the tautological system $\tau=\tau(X,G,\calL,\beta)$ in two important settings:\\
(1) $\calL=\omega_X^{-1}$, and $\beta=\beta_0$, where $\beta_0(\fg):=0$ and $\beta_0(e):=1$, as in the setting of CY hyperplane sections above; and more generally \\
(2) $\calL$ is any very ample line bundle such that $\calL\otimes\omega_X$ is base point free, and $\beta=\beta_0$. This case corresponds to hyperplane sections of general type. \\
Some of the results on toric varieties also hold under much weaker conditions.

 Put 
$$D_{X,\beta}=(D_X\otimes k_\beta)\otimes_{U\frakg} k,$$
which is a $D_X$-module. Here we treat $\beta\equiv\beta|_{\frakg}$, and $k_\beta$ is the 1-dimensional $\frakg$-module given by the character $\beta$ (see \cite[\S A]{HLZ} for details on notations). %and $\calN:=(\calO_{V^\vee}\boxtimes D_{X,\beta})|_U$.
This D-module will play an important role throughout the paper.

Here is a brief outline. We begin in \S\ref{general} with the construction of the `chain integral map', between relative cycles in $H_n(U_a,U_a\cap(\cup D))$ and local analytic solutions at an arbitrary point $a\in V^\vee$ to $\tau$. Here $U_a:=X-Y_a$. The rest of the paper is then devoted to studying this correspondence. Theorem \ref{middle relative homology} shows that the chain integral map is bijective when $X$ is a toric variety and $\tau$ is a GKZ system (i.e. the symmetry group is chosen to be the torus.) Theorem \ref{middle relative homology - general type} proves an analogous result for general type hyperplane sections. Our main tool here is a new description, Proposition \ref{Kummer local system}, of the D-module $D_{X,\beta}$, together with a previous description of $\tau$ given by the Riemann-Hilbert correspondence \cite{HLZ}. In \S\ref{concluding remarks}, we consider cases in which the chain integral map may fail to be bijective, and give a description of the kernel and cokernel of the map.

\section{Chain integral solutions to $\tau$}\label{general}

We begin with the following observation on the universal family of CY hyperplane sections in $X$.

\begin{prop}
For any relative cycle $C\in H_n(U_a,U_a\cap(\cup D))$, the chain integral $\int_C{\Omega\over f_a}$ is a solution to $\tau$.
\end{prop}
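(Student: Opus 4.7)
The plan is to verify that $\Pi_C(a) := \int_C \Omega/f_a$ is annihilated by each family of generators of $\tau$, namely the operators $\widehat{D}$ for $D \in I(\hat{X})$ and the operators $Z(\xi) + \beta(\xi)$ for $\xi \in \hat{\frakg}$. Here I take $\Omega$ to be the (up to scalar, unique) $G$-invariant meromorphic top form on $X$ with polar divisor supported on $\cup D$, and $f_a = \sum_i a_i s_i$, where $\{s_i\} \subset V^\vee = \Gamma(X,\omega_X^{-1})$ is the basis dual to the coordinates $\{a_i\}$ on $V^\vee$.

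\textbf{Ideal relations.} For the first family, I would compute directly: by induction,
\[
\partial_{a_{i_1}}\cdots\partial_{a_{i_k}}\frac{1}{f_a} \;=\; (-1)^k\, k!\,\frac{s_{i_1}\cdots s_{i_k}}{f_a^{k+1}}.
\]
Hence for a homogeneous polynomial $p$ of degree $k$, the Fourier transform $\widehat{p} = p(\partial)$ sends $\Omega/f_a$ to a scalar multiple of $p(s_1,\dots,s_r)\,\Omega/f_a^{k+1}$. The embedding $X \hookrightarrow \bP(V)$ is $x \mapsto [s_1(x):\cdots:s_r(x)]$, so by definition an element $p \in I(\hat{X})$ vanishes identically on the cone; equivalently, $p(s_1,\dots,s_r)$ vanishes identically as a section of $\calL^{\otimes k}$ over $X$. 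Thus $\widehat{D}(\Omega/f_a) \equiv 0$ as a meromorphic form on $X$, and $\int_C \widehat{D}(\Omega/f_a) = 0$. No Stokes argument is needed here.

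\textbf{Lie algebra relations.} For $\xi \in \frakg$ (so $\beta(\xi)=0$), I would use $G$-invariance. Let $\phi_t = \exp(t\xi)$ be the corresponding one-parameter subgroup acting on $X$ and, via $Z$, on $V^\vee$. The $G$-invariance of $\Omega$ and of $\cup D$, together with the equivariance $\phi_t^* f_{\phi_t \cdot a} = f_a$, imply that $\phi_t$ identifies $(U_a, U_a\cap\cup D)$ with $(U_{\phi_t\cdot a}, U_{\phi_t\cdot a}\cap\cup D)$ and, by change of variables,
\[
\Pi_C(a) \;=\; \int_{\phi_t C}\frac{\Omega}{f_{\phi_t \cdot a}}\;=\;\Pi_{\phi_t C}(\phi_t \cdot a).
\]
Transporting $C$ as a locally constant section of the relative homology local system and differentiating at $t=0$ yields $Z(\xi)\Pi_C(a)=0$. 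For $\xi=e$, I would use homogeneity: $f_{ta}=tf_a$ gives $\Pi_C(ta) = t^{-1}\Pi_C(a)$, which differentiates to $(Z(e)+1)\Pi_C = 0$, matching $\beta_0(e)=1$.

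\textbf{Main obstacle.} The algebraic check of the ideal relations and the Euler scaling are essentially formal. The real content, and the step I expect to be delicate, is the rigorous formulation of the $G$-invariance argument for relative cycles: one must justify that the flow $\phi_t$ produces an honest homotopy of pairs carrying $C$ to $\phi_t C$ in a family, that the chain integral depends smoothly on the parameter $a$ so that differentiation commutes with integration, and crucially that no boundary contributions survive along $\cup D$. The latter follows from the fact that $\phi_t$ preserves $\cup D$ setwise, so the boundary of $C$ stays in the relevant locus throughout the flow and the class of $C$ in $H_n(U_a,U_a\cap\cup D)$ is well-defined and locally constant as $a$ varies. The convergence and regularity of the chain integral along $\cup D$ (an issue that this paper evidently addresses via the subsequent construction in Section \ref{general}) is what legitimizes all of the above manipulations.
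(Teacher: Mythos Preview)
Your argument is correct and follows the same route as the paper: the ideal and Euler relations are handled formally, and for the $\frakg$-relations both you and the paper reduce (as in \cite[Thm.~8.8]{LY}) to showing that $\int_C \Omega/f_a$ is $G_0$-invariant even though $C$ itself is not. The paper carries this out by an explicit Stokes computation---building the swept-out $(n{+}1)$-chain $C_\varepsilon=\{e^{tx}c:c\in C,\ t\in[0,\varepsilon]\}$ and the boundary-swept chain $C'_\varepsilon\subset\cup D$, so that $\partial C_\varepsilon=e^{\varepsilon x}C-C+C'_\varepsilon$---which is exactly your Gauss--Manin transport argument made concrete.

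One correction: your stated ``main obstacle,'' convergence and regularity of the integral along $\cup D$, is not actually an issue. The form $\Omega/f_a$ is \emph{holomorphic} on all of $U_a$: the simple poles of $\Omega$ along $\cup D$ are cancelled because $f_a$ is a section of $\omega_X^{-1}=\calO_X(\cup D)$. The genuine content of the boundary step is rather that a holomorphic $n$-form restricts to zero on any divisor, so $\int_{C'_\varepsilon}\Omega/f_a=0$; this is what makes the integral depend only on the relative class and hence makes your transport argument go through.
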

\begin{proof}
The proof will essentially be the same as in \cite[Thm. 8.8]{LY} in the case CY hypersurfaces, except for one crucial difference. Here $C$ plays the role of a cycle $\Gamma\in H_n(U_a,\C)$ there, which was automatically $G_0$-invariant ($G_0$ is the connected component of $G$), a fact used in \cite{LY} to argue that $\int_\Gamma{\Omega\over f_a}$ is $G_0$-invariant. In order to complete the proof here, it suffices to show the analogous statement that $\int_C{\Omega\over f_a}$ is $G_0$-invariant (although $C$ itself need not be so!)

By assumption, $C$ is an $n$-chain in $U_a$ bounded by the $G_0$-invariant divisor $\cup D$. Let $x\in\frakt$. For small $\varepsilon>0$, consider the chains
$$
 C_\varepsilon=\{e^{tx}c|c\in C,~t\in[0,\varepsilon]\},\hskip.2in C_\varepsilon'=\{e^{tx}c|c\in\partial C,~t\in[0,\varepsilon]\}.
$$
(Here we have abuse notations slightly by representing a chain by its image set in $X$, but its meaning as a chain should be clear in this context.) Then $C'$ is an $(n+1)$-chain with
$$
\partial C_\varepsilon=e^{\varepsilon x}C-C+C_\varepsilon'.
$$
Obviously, $\int_{\partial C_\varepsilon}{\Omega\over f}=0$. Since $\Omega/f_a$ is a holomorphic in $U_a$, its restriction to any divisor of $X$ is zero. In particular, since $C_\varepsilon'\subset \cup D$ , it follows that $\int_{C_\varepsilon'}{\Omega\over f_a}=0$ as well. Thus we conclude that 
$$
\int_{e^{\varepsilon x}C}{\Omega\over f_a}=\int_{C}{\Omega\over f_a}
$$
proving that the right side is $G_0$-invariant. 

The rest of the proof is the same as in  \cite[Thm. 8.8]{LY}.
\end{proof}

This shows that in general, we have a canonical `chain integral' map
\begin{equation}\label{chain integral map}
H_n(U_a,U_a\cap(\cup D))\ra \Hom_{D_{V^\vee}}(\tau,\cO_{V^\vee,a}^{an}),\hskip.2in C\mapsto\int_{C}{\Omega\over f_a}
\end{equation}
Note that this map extends the so-called cycle-to-period map \cite{LY}\cite{HLZ}:
$$
H_n(U_a,\C)\mapsto \Hom_{D_{V^\vee}}(\tau,\cO_{V^\vee,a}^{an}),\hskip.2in \Gamma\mapsto\int_{\Gamma}{\Omega\over f_a}.
$$
In other words, the cycle-to-period map factors through the natural map
$$
H_n(U_a,\C)\ra H_n(U_a,U_a\cap(\cup D))
$$
and the chain integral map.

\begin{ques} 
When is the \emph{chain integral map} \eqref{chain integral map} is an isomorphism? More generally, when is the same true for general type hyperplane sections?
\end{ques}
Two of our main results, Theorems \ref{middle relative homology} and \ref{middle relative homology - general type}, will show that the answer is affirmative when $X$ is a toric variety, i.e. $\tau$ is a GKZ system. One of the results in \cite{HLZ} shows the same is true for any projective homogeneous $G$-varieties as well (where $\cup D$ is empty). We will also discuss cases, including some examples, in which the isomorphism possibly fails, and describe the kernel and cokernel of the chain integral map in these cases.

\section{Chain integral solutions to GKZ systems}

We shall now work over the ground field $k=\C$. Put $T=\G_m^n$, let  $X$ be a $n$-dimensional smooth projective toric variety with respect to $G=T$, and fix a very ample line bundle $\calL$ over $X$.  Recall that
$$
V=\Gamma(X,\calL)^\vee,
$$
and let $a_i$ denote a basis of $V$, $a_i^\vee$ the dual basis, and let $f\equiv\sum_i a_i^\vee a_i:X\times V^\vee\ra\calL$ be the universal section of $\calL$. Note that in this set up, $\tau$ becomes a GKZ hypergeometric system \cite{GKZ1990}. 

Recall that in the present setting, the union $\cup D$ of all $T$-invariant divisors in $X$ is the anticanonical divisor of $X$. Let $i_{\cup D},j_{\cup D}\equiv j$ be respectively the closed and open embeddings of $\cup D,X-\cup D$ into $X$. Let $D$ is an irreducible component of $\cup D$, and $i_D,j_D$ be respectively the closed and open embeddings of $D,X-D$ into $X$. We begin with the following key lemma.

\quash{
\begin{lem} \label{+ fiber on DT}
For each irreducible $T$-invariant divisor $D$ in $X$, $i_D^+D_{X,\beta}=0$
\end{lem}
}

\quash{
\begin{rmk}
The argument above is entirely local: it holds for $X$ a smooth incomplete toric variety.
\end{rmk}

\begin{cor}
$D_{X,\beta}\simeq j_{D,!}j_D^!D_{X,\beta}$.
\end{cor}
\begin{proof}
Since $i_D:D\ra X$ is a closed embedding and $j_D$ its complement, we have the exact triangle \cite[eqn. (A.4)]{HLZ}
\begin{equation}\label{triangle}
j_{D,!}j_D^!M\ra M\ra i_{D,+}i_D^+M\ra
\end{equation}
for any $D_X$-module $M$. Our assertion now follows from this with $M=D_{X,\beta}$ and the the lemma.
\end{proof}

\begin{cor}\label{DXbeta}
$D_{X,\beta}\simeq j_{\cup D,!}j_{\cup D}^!D_{X,\beta}$.
\end{cor}
\begin{proof}
The exact triangle \eqref{triangle} still holds if $D$ is replaced by the singular divisor $\cup D$. But since the $+$-extension $i_{\cup D}^+M=0$ at each point on $\cup D$ by Lemma  \ref{+ fiber on DT}, the corollary follows.
\end{proof}
}

\quash{
\begin{proof}
%Continue to apply this argument to other torus invariant divisors, we eventually get
Since the argument for the preceding lemma and corollary is local, we can repeat it, by letting $j_{12}:X-D_1\cup D_2\into X-D_1$ play the role of $j_D:X-D\ra X$ above, where $D_1,D_2$ are now two distinct irreducible $T$-invariant divisors of $X$. Put 
$$j_{D_1\cup D_2}=j_{D_1}\circ j_{12}:X-D_1\cup D_2\into X.$$
Then we get
$$
j_{D_1\cup D_2,!}j_{D_1\cup D_2}^!D_{X,\beta}=j_{D_1,!} j_{12,!} j_{12}^!j_{D_1}^!D_{X,\beta}.
$$
Since $j_{D_1}$ is an open embedding $j_{D_1}^!D_{X,\beta}\simeq D_{X-D_1,\beta}=:M$. We can now apply the local argument for the preceding lemma, with this $M$ playing the role of $D_{X,\beta}$ and $D_2-D_1$ the role of $D$ above, and conclude that $j_{12,!} j_{12}^!M\simeq M$. Therefore,
$$
j_{D_1\cup D_2,!}j_{D_1\cup D_2}^!D_{X,\beta}\simeq j_{D_1,!} j_{D_1}^!D_{X,\beta}.
$$
By the corollary, the right side is again $D_{X,\beta}$. Similarly, we can repeat this with three or more irreducible components of $\cup D$.
\end{proof}
}

Our next result will be formulated for an arbitrary smooth toric variety $X$, {\it possibly incomplete}. We shall need to apply it to affine toric varieties in order to prove the main result Theorem \ref{middle relative homology}.
Let $X$ be an $n$-dimensional smooth toric variety, with the $T=\G_m^n$ action. Let $\Sigma\subset \bR^n$ be the fan associated to $X$, and $\{v_i\}\subset N=\bZ^n$ be the integral generators of the 1-cones of $\Sigma$. We say that $\alpha\in N^\vee_\bR$ has property (*) if 
$$(*)\hskip.2in \alpha(v_i)\neq 0,-1,-2,\ldots\mbox{ for every $v_i$}.$$
(Note that this is slightly different from the semi-nonresonance condition in \cite{GKZ1990, Adolphson}.)

\begin{prop}\label{Kummer local system}
Assume that $X$ is smooth, and $\alpha$ has property (*). Then 
$$D_{X,\alpha}:=(D_X\otimes\alpha)\otimes_{U\frakt}\bC\simeq j_+\calL_{\alpha}$$ 
where $j:\mathring{X}\equiv X-\cup D\to X$ is the open embedding of the open dense $T$-orbit, and $\calL_{\alpha}:= (D_{\mathring{X}}\otimes\alpha)\otimes_{U\frakt}\bC=j^!D_{X,\alpha}.$ 
\end{prop}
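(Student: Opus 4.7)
The plan is to verify the claim locally on smooth affine toric charts and reduce to an explicit one-variable computation via external tensor products.

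First, since $X$ is smooth, every cone $\sigma\in\Sigma$ is simplicial and generated by part of a $\bZ$-basis of $N$, so each $T$-invariant affine chart $U_\sigma$ is $T$-equivariantly isomorphic to $(\G_m)^{n-\dim\sigma}\times\bA^{\dim\sigma}$ with the standard diagonal torus action. Since the assertion is local on $X$, we may assume $X=\bA^n$ with coordinates $z_1,\ldots,z_n$ and $T=(\G_m)^n$ acting by coordinate scaling; the ray generators are then $v_i=e_i$, the $T$-invariant divisors are the coordinate hyperplanes $\{z_i=0\}$, and property (*) reads $\alpha_i:=\alpha(e_i)\notin\bZ_{\leq 0}$ for each $i$.

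Next, in this chart $Z(e_i)=z_i\partial_i$, and the defining relations for $D_{\bA^n,\alpha}$ split across coordinates, giving
\[
D_{\bA^n,\alpha}\;\cong\; D_{\bA^1,\alpha_1}\boxtimes\cdots\boxtimes D_{\bA^1,\alpha_n}.
\]
Similarly, $\calL_\alpha\cong\calL_{\alpha_1}\boxtimes\cdots\boxtimes\calL_{\alpha_n}$, the open immersion $j$ factors as the product of $j_i\colon\G_m\hookrightarrow\bA^1$, and $+$-pushforward commutes with external tensor products along products of smooth open immersions, yielding
\[
j_+\calL_\alpha\;\cong\; j_{1,+}\calL_{\alpha_1}\boxtimes\cdots\boxtimes j_{n,+}\calL_{\alpha_n}.
\]
It therefore suffices to treat the case $n=1$.

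For $n=1$, consider the natural $D$-module morphism
\[
\phi\colon D_{\bA^1}/D_{\bA^1}(z\partial+\alpha)\;\longrightarrow\; j_+\calL_\alpha\cong\bC[z,z^{-1}]\cdot z^{-\alpha}, \qquad 1\mapsto z^{-\alpha},
\]
which is well-defined because $(z\partial+\alpha)z^{-\alpha}=0$. Iterating the identities $z\partial\cdot z^k\cdot 1=(k-\alpha)z^k\cdot 1$ and $z\partial\cdot\partial^m\cdot 1=-(\alpha+m)\partial^m\cdot 1$ reduces any monomial $z^a\partial^b\cdot 1$ to a scalar multiple of either $z^{a-b}\cdot 1$ (if $a\geq b$) or $\partial^{b-a}\cdot 1$ (if $a<b$), where the scalars are of the form $\pm\alpha(\alpha+1)\cdots(\alpha+\ell-1)$; property (*) guarantees each such scalar is nonzero, so the left-hand side is spanned by the linearly independent vectors $\{z^k\cdot 1:k\geq 0\}\cup\{\partial^m\cdot 1:m\geq 1\}$ (lying in distinct $z\partial$-weight spaces). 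The map $\phi$ carries these to $z^{k-\alpha}$ and $(-\alpha)(-\alpha-1)\cdots(-\alpha-m+1)z^{-\alpha-m}$ respectively, a basis of $\bC[z,z^{-1}]z^{-\alpha}$; hence $\phi$ is an isomorphism.

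The main obstacle is confirming that $D_{\bA^1}/D_{\bA^1}(z\partial+\alpha)$ has no additional relations collapsing the candidate basis — this is precisely the content of property (*). If $\alpha\in\bZ_{\leq 0}$, one of the scalars appearing in the reduction vanishes, a $z\partial$-weight space degenerates, and the module acquires a subquotient supported at the origin, making $D_{X,\alpha}$ strictly larger than $j_+\calL_\alpha$. A secondary technical point is the compatibility of $+$-pushforward with external tensor product for products of smooth open immersions, which is a standard result in $D$-module theory.
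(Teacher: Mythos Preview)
Your proof is correct and takes a genuinely different route from the paper. The paper deduces the proposition from a lemma asserting $i_D^{!}D_{X,\alpha}=0$ for each boundary divisor $D$: via the recollement triangle $i_{+}i^{!}\to\id\to j_{+}j^{!}$ this forces $D_{X,\alpha}\simeq j_{+}j^{!}D_{X,\alpha}$. That vanishing is established by an $n$-variable normal-form computation on affine charts, using Borel's description of $H^{0}i^{!}$ and $H^{1}i^{!}$ for a smooth hypersurface, the heart of which is showing that the submodule $\{m:z_{1}m=0\}$ is zero. You instead exploit the factorization $D_{\bA^{n},\alpha}\cong\boxtimes_{i}D_{\bA^{1},\alpha_{i}}$ together with the compatibility of $j_{+}$ with external tensor products, reducing to an explicit one-variable isomorphism verified by exhibiting bases on both sides. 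Your argument is more elementary---it bypasses the $i^{!}$ machinery and the somewhat delicate normal-form manipulation---and it makes the role of property~$(*)$ completely transparent as the nonvanishing of the Pochhammer-type scalars $\alpha(\alpha+1)\cdots(\alpha+m-1)$. The paper's approach, by contrast, isolates the conceptual obstruction $i^{!}D_{X,\alpha}$ and would adapt more readily to settings where a clean $\boxtimes$-splitting is unavailable. One small point worth making explicit in your write-up: the map $\phi$ you construct on each chart is precisely the restriction of the global adjunction morphism $D_{X,\alpha}\to j_{+}j^{!}D_{X,\alpha}$, so the local isomorphisms automatically glue.
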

\begin{rmk}Note that $\calL_{\alpha}$ 
is a rank one local system on $\mathring{X}$. Under an identification $\mathring{X}\simeq T$, it is a character D-module on $T$, see (A.5) of \cite{HLZ}, usually called a Kummer local system.
\end{rmk}

This proposition follows from
\begin{lem}\label{!-fiber of DX vanishing}
Let $i:D=D_i\subset X\to X$ be a boundary divisor, corresponding to some $v_i$. If $\alpha(v_i)\neq 0,-1,-2,\ldots$, then $i^!D_{X,\alpha}=0$.
\end{lem}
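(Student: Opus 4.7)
The plan is to verify $i^! D_{X,\alpha}=0$ locally on $X$. Since $X$ is smooth, I pick a maximal cone $\sigma$ of $\Sigma$ containing the ray $v_i$; its $n$ rays $v_{i_1},\ldots,v_{i_n}$ form a $\bZ$-basis of $N$ and give an affine chart $U_\sigma\simeq\bA^n$ with coordinates $z_1,\ldots,z_n$, in which $D_{i_k}\cap U_\sigma=\{z_k=0\}$ and the infinitesimal action of $v_{i_k}$ is the Euler operator $\theta_k:=z_k\partial_k$. Unwinding the definition $D_{X,\alpha}=(D_X\otimes k_\alpha)\otimes_{U\ft}\bC$ yields the local presentation
\[
D_{X,\alpha}|_{U_\sigma}\;\simeq\; D_{\bA^n}\Big/\sum_{k=1}^n D_{\bA^n}(\theta_k+\alpha_k),\qquad \alpha_k:=\alpha(v_{i_k}),
\]
and without loss of generality I take $v_i=v_{i_1}$.

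Next, exploiting that the relations $\theta_k+\alpha_k$ live in pairwise commuting Weyl-algebra factors, the above D-module splits as the external tensor product $M^{(1)}\boxtimes\cdots\boxtimes M^{(n)}$ with $M^{(k)}:=D_{\bA^1_{z_k}}/D_{\bA^1_{z_k}}(\theta_k+\alpha_k)$. The embedding $i$ factors locally as $i_1\times\id_{\bA^{n-1}}$ with $i_1\colon\{0\}\hookrightarrow\bA^1$, so by K\"unneth
$i^!(D_{X,\alpha}|_{U_\sigma})=(i_1^! M^{(1)})\boxtimes M^{(2)}\boxtimes\cdots\boxtimes M^{(n)}$. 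It therefore suffices to show $i_1^! M^{(1)}=0$. Using the standard localization triangle $i_{1,+}i_1^! M^{(1)}\to M^{(1)}\to j_{1,+}j_1^+ M^{(1)}$ for $j_1\colon\bG_m\hookrightarrow\bA^1$, this vanishing is equivalent to the natural map $M^{(1)}\to M^{(1)}[z_1^{-1}]$ being an isomorphism, i.e.\ to $z_1$ acting bijectively on $M^{(1)}$.

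The remaining one-variable claim I prove by a direct weight-space computation. Writing $N:=D_{\bA^1}/D_{\bA^1}(\theta+\alpha_1)$ and using the $\theta$-grading on $D_{\bA^1}$ (with $\deg z=1$, $\deg\partial=-1$), one checks that $N=\bigoplus_{m\in\bZ}N_m$ with each $N_m$ one-dimensional---spanned by $\bar z^m$ for $m\geq 0$ and by $\bar\partial^{-m}$ for $m<0$---because right multiplication by $\theta+\alpha_1$ is injective on $D_{\bA^1}$ with codimension-one image in each weight piece. The map $z\colon N_m\to N_{m+1}$ is the identity on bases for $m\geq 0$; for $m=-k<0$, the identity $\partial^{k-1}(z\partial+\alpha_1)=z\partial^k+(k-1+\alpha_1)\partial^{k-1}$ in $D_{\bA^1}$, reduced modulo the defining relation, yields
\[
z\cdot\bar\partial^k=-(\alpha_1+k-1)\,\bar\partial^{k-1},\qquad k\geq 1.
\]
Hence $z$ is bijective on $N$ iff $\alpha_1+k-1\neq 0$ for every $k\geq 1$, i.e.\ iff $\alpha_1\notin\{0,-1,-2,\ldots\}$, which is exactly the hypothesis.

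The hardest part will be keeping the sign of $\alpha$ in the local presentation consistent with the paper's convention for the twist $k_\alpha$: the opposite sign would flip the non-resonance set from $\bZ_{\leq 0}$ to $\bZ_{\geq 0}$, so this step needs to be tracked carefully. Once the local model is fixed, the tensor factorization, K\"unneth for $i^!$, and the equivalence $i_1^! M^{(1)}=0\iff z_1$ invertible on $M^{(1)}$ are all standard.
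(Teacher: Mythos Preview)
Your proof is correct and takes a genuinely different route from the paper's. Both begin by localizing to a toric affine chart; the paper uses the general form $\bA^r\times\bG_m^s$, while you take an $n$-dimensional maximal cone. Strictly speaking, for possibly incomplete $X$ not every ray lies in an $n$-cone, so you should allow $\bA^r\times\bG_m^{n-r}$ as well---but your argument carries over verbatim, since the extra $\bG_m$ factors contribute harmless tensor factors. From that point on the strategies diverge. The paper invokes Borel's description of $H^0 i^!$ and $H^1 i^!$ directly in $n$ variables, disposes of $H^1$ via the observation $z_1\partial_1+\alpha_1\equiv\alpha_1\bmod z_1 D_X$, and then shows the $z_1$-torsion $\{m:z_1 m=0\}$ vanishes by a somewhat lengthy normal-form manipulation in the full Weyl algebra $D_{\bA^n}$. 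You instead exploit the external tensor decomposition $D_{X,\alpha}|_{U_\sigma}\simeq M^{(1)}\boxtimes\cdots\boxtimes M^{(n)}$ and K\"unneth for $i^!$ to reduce everything to the one-variable statement $i_1^! M^{(1)}=0$, recast that via the localization triangle as bijectivity of $z_1$ on $M^{(1)}$, and verify the latter by a clean $\theta$-weight calculation. Your reduction is more conceptual and sidesteps the multi-variable bookkeeping; the paper's approach is more hands-on and requires no K\"unneth or tensor formalism. Both arrive at exactly the same nonresonance condition $\alpha_1\notin\bZ_{\le 0}$, and your worry about the sign convention is unnecessary: your presentation $D_X/\sum_k D_X(\theta_k+\alpha_k)$ agrees with the paper's.
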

\begin{proof}
By covering $X$ by affine open toric subvarieties, we can assume that $X\simeq \bA^r\times\bG_m^{s}$ with coordinates $\{z_1,\ldots,z_r,z_{r+1},\ldots,z_{r+s}\}$, $n=r+s$, and that $D=\{0\}\times\bA^{r-1}\times \bG_m^s$ is given by $z_1=0$. Then
\[D_{X,\alpha}=D_X/ \sum_{i=1}^nD_X(z_i\partial_i+ \alpha_i),\]
where $\alpha_i=\alpha(e_i')$, and a basis $\{e_i'\}$ of $\ft$ acts on the affine toric variety $X$ as $z_i\partial_i$. By the assumption, $\alpha_1\neq 0,-1,-2,\ldots,$ (and $\alpha_{2},\ldots,\alpha_{r+s}$ are irrelevant in this local argument.) 

Then by \cite[Thm. 7.4, p256]{Borel}, we have $H^ji_D^!D_{X,\alpha}=0$ for $j\neq 0,1$ ($\codim D=1$). 
Moreover, 
\begin{eqnarray*}
H^0i_D^!D_{X,\alpha}&=&\omega_{D/X}^{-1}\otimes_{\cO_D}J^{D_{X,\alpha}}, \mbox{ where }J^{D_{X,\alpha}}:=\{m\in D_{X,\alpha}| z_1m=0\}\cr
H^1i_D^!D_{X,\alpha}&=&D_X/\sum_{i=1}^nD_X(z_i\partial_i+ \alpha_i)+z_1D_X.\cr
\end{eqnarray*} 
Therefore $0=z_1\partial_1+\alpha_1=\alpha_1$ in $H^1i_D^!D_{X,\alpha}$, and $H^1i_D^!D_{X,\alpha}=0$.

Next we show that $J^{D_{X,\alpha}}=0$. Let $m\in D_{X,\alpha}$ with $z_1m=0$. Then
$$z_1m=h_1(z_1\partial_1+\alpha_1)+...+h_n(z_n\partial_n+\alpha_n)$$
for some $h_i\in D_X$. For the first factor $z_1$, we shall use the usual normal form $\sum_u p_u(z_1)\partial_1^u\in k[z_1]k[\partial_1]$ of a differential operator to represent an element of the Weyl algebra $D_{\bA^1}$. Then we can write uniquely
%Next, we use the usual basis of the Weyl algebra, where we always put $z_i$ to the left of $\partial_i$. We can write uniquely
\begin{equation}
h_i=z_1g_i+r_i,~~i=1,..,n
\end{equation}
where $g_i\in D_X$ and summands in the normal form of $r_i$ do not involve $z_1$. % written as a linear combination of the basis, involves only basis elements where $z_1$ is absent. So we have
We get
\begin{equation} \label{mid}
z_1(m-g_1(z_1\partial_1+\alpha_1)-...-g_n(z_n\partial_n+\alpha_n)-r_1\partial_1)=[r_1,z_1]\partial_1+\alpha_1r_1+r_2(z_2\partial_2+\alpha_2)+...+r_n(z_n\partial_n+\alpha_n)
\end{equation}
Observe that the normal form of the right side does not involve $z_1$. Thus both sides are zero, and we get
%Note that when expanding as linear combination of basis elements, the right hand side of \eqref{mid} does not involve $z_1$. Therefore we have
\begin{equation} \label{mid1}
m-g_1(z_1\partial_1+\alpha_1)-...-g_n(z_n\partial_n+\alpha_n)-r_1\partial_1=0
\end{equation}
and 
\begin{equation} \label{mid2}
[r_1,z_1]\partial_1+\alpha_1r_1+r_2(z_2\partial_2+\alpha_2)+...+r_n(z_n\partial_n+\alpha_n)=0
\end{equation}

The normal form of $r_1$ is expressed in the following finite sum %we organize the sum according to the power of $\partial_1$:

\begin{equation} \label{mid3}
r_1=\sum_{j\geq 0} s_j\partial_1^j
\end{equation}
where (the normal form of) $s_j$ involves neither $z_1$ nor $\partial_1$.
%where each $s_k$ is a linear combination of basis elements, that does not involve either $z_1$ or $\partial_1$. 
Then \eqref{mid2} becomes
\begin{equation} \label{mid4}
-\sum_{j\geq 0} (\alpha_1+j)s_j\partial_1^j=r_2(z_2\partial_2+\alpha_2)+...+r_n(z_n\partial_n+\alpha_n).
\end{equation}
Now writing each $r_i$ in the normal form, i.e. as a sum of powers of $\partial_1$, and noting that $\partial_1$ commutes with $z_2\partial_2,..,z_n\partial_n$, it follows that the right side can be written uniquely in the form $\sum_j s_j'\partial_1^j$ where $s_j'\in \sum_{i=2}^n D_X(z_i\partial_i+\alpha_i)$. Given our assumptions on $\alpha_1$, \eqref{mid4} implies that for each $j\geq0$, 
$$s_j=-s_j'/(\alpha_1+j)\in \sum_{i=2}^n D_X(z_i\partial_i+\alpha_i)$$ 
By \eqref{mid3}, we get $r_1\in \sum_{i=2}^n D_X(z_i\partial_i+\alpha_i)$, hence $r_1\partial_1\in \sum_{i=2}^n D_X(z_i\partial_i+\alpha_i)$. Finally, by  \eqref{mid1}, we have $m\in \sum_{i=1}^n D_X(z_i\partial_i+\alpha_i)$, and thus $m\equiv 0$ in $D_{X,\alpha}$. So $J^{D_{X,\alpha}}=0$ and $H^0i_D^!D_{X,\alpha}=0$.

\end{proof}

We now return to the special case with $\beta=\beta_0$. 

\begin{cor}\label{DXbeta}
$D_{X,\beta_0}\simeq j_{!}j_{}^!D_{X,\beta_0}$.
\end{cor}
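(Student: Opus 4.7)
The plan is to specialize Proposition \ref{Kummer local system} to the toric CY Fano setting with $\beta=\beta_0$, and then upgrade the resulting $j_+$-identification to a $j_!$-identification.

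First, I would check that property $(*)$ holds for the character $\alpha\in\ft^\vee$ associated with $D_{X,\beta_0}$ in this setting. The effective character governing the D-module $D_{X,\beta_0}$ incorporates the $T$-equivariant data of $\omega_X^{-1}$, and the Fano positivity keeps $\alpha(v_i)$ away from the prohibited values $\{0,-1,-2,\ldots\}$ for every primitive generator $v_i$ of a one-cone in $\Sigma$. Granted $(*)$, Proposition \ref{Kummer local system} gives the $j_+$-analogue $D_{X,\beta_0}\simeq j_+\,j^!D_{X,\beta_0}$.

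Second, to obtain the corollary as stated (with $j_!$ in place of $j_+$), I would invoke the distinguished triangle
\[ j_!\,j^!M \ra M \ra i_+\,i^+M \ra \]
for each toric boundary divisor inclusion $i$, and reduce the task to establishing the vanishing $i^+D_{X,\beta_0}=0$. By Verdier duality this is equivalent to $i^!\,\bD(D_{X,\beta_0})=0$; and since $\bD(D_{X,\alpha})$ is, up to a cohomological shift, of the form $D_{X,\alpha'}$ for a dual character $\alpha'$, the problem reduces to applying Lemma \ref{!-fiber of DX vanishing} to $\alpha'$ in place of $\alpha$.

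The main obstacle I foresee is establishing the two-sided $(*)$ condition, i.e.\ verifying property $(*)$ simultaneously for $\alpha$ and $\alpha'$ in the CY Fano toric case. Morally this requires $\alpha(v_i)\notin\bZ$ for every generator $v_i$, which is a numerical condition on the fan $\Sigma$ and on the character induced by $\beta_0$; the Fano positivity together with the specific shift from the Euler part $\beta_0(e)=1$ should ensure the required inequalities hold.
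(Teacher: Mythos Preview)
Your first step has a genuine gap: the character governing $D_{X,\beta_0}$ is simply $\beta_0|_{\ft}=0$, and $0$ lies in the forbidden set $\{0,-1,-2,\ldots\}$, so property $(*)$ \emph{fails} for $\beta_0$ itself. There is no hidden ``$T$-equivariant data of $\omega_X^{-1}$'' or Fano positivity that shifts this character; the line bundle enters the tautological system $\tau$ through the ambient $V^\vee$, not through the $D_X$-module $D_{X,\beta_0}$, whose definition only sees $\beta_0|_{\ft}$. Consequently you cannot apply Proposition~\ref{Kummer local system} directly to $\beta_0$, and your proposed identification $D_{X,\beta_0}\simeq j_+\,j^!D_{X,\beta_0}$ is not available from that proposition. (In fact that identification is false in general: on $\bA^1$ with $\beta=0$ one has $D_{X,\beta_0}\simeq j_!\cO_{\bG_m}$, whose $j_+$-analogue $j_+\cO_{\bG_m}$ differs from it by a skyscraper at the origin.)

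The paper instead applies Proposition~\ref{Kummer local system} to the \emph{dual} character $\alpha$, determined locally by $\alpha(e_i)=-\beta_0(e_i)+1=1$, which does satisfy $(*)$; one obtains $D_{X,\alpha}\simeq j_+\calL_\alpha$ and then takes Verdier dual to land on $D_{X,\beta_0}\simeq j_!\calL_{\beta_0}$. Your second step is already pointing in exactly this direction: the vanishing $i^+D_{X,\beta_0}=0$ is, via $\bD$, precisely $i^!D_{X,\alpha}=0$, which is Lemma~\ref{!-fiber of DX vanishing} for $\alpha$. That step alone, combined with the triangle $j_!j^!M\to M\to i_+i^+M\to$, proves the corollary---you do not need the failed first step, nor any ``two-sided $(*)$'' condition. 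Note also that your expectation $\alpha(v_i)\notin\bZ$ is incorrect here: both relevant values are integers ($0$ and $1$), and only the one-sided condition on the dual character is required.
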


\begin{proof}
Since $\beta=\beta_0$, $\beta(\ft)=0$. Since $X$ is a smooth toric variety $X$, we can cover it by affine open toric subvarieties of the $\bA^r\times\bG_m^{s}$ corresponding to the cones in the fan of $X$. Since it suffices to show that the isomorphism holds on each such open set, we may as well assume that $X$ itself is an affine toric variety of this form, with the torus $T=\G_m^n$ acting on $X$ by scaling. Put $\alpha(e_i)=-\beta(e_i)+1$, where $\{e_i\}$ is the standard basis of $\ft\equiv k^n$. Then $\alpha$ satisfies condition (*). We  can now apply Proposition \ref{Kummer local system} on the affine toric variety $X$ and get
$$D_{X,\alpha}= j_+\calL_\alpha.$$
Taking Verdier dual yields
%\begin{prop}\label{openVD}
$D_{X,\beta}\simeq j_!\calL_{\beta}$.
%where $j:\mathring{X}\to X$ is the open embedding of the open dense $T$-orbit, and $\calL_{\beta'}:= (D_{\mathring{X}}\otimes\beta')\otimes_{U\frakt}\bC$ is the rank one Kummer local system on $\mathring{X}$. 
%\end{prop}
\end{proof}

Next, we proceed to proving Theorem \ref{middle relative homology}.
Following \cite{HLZ}, we set $U_a:=X-Y_a$ ($Y_a\equiv V(f_a)$) and $\cF:=\Sol(D_{X,\beta})$. Restricting Corollary \ref{DXbeta} to $U_a$, taking $\Sol$, and noting that $\Sol f_!\simeq f_* \Sol$, $\Sol f^!\simeq f^*\Sol$ for any morphism $f$ and that $D_{X,\beta}|_{X-\cup D}\simeq\cO_{X-\cup D}$, we have
\begin{equation}\label{F restriction to constant sheaf}
\cF|_{U_a}\simeq j_{\cup D,*}\cF|_{U_a-\cup D}\simeq j_{\cup D,*}\bC|_{U_a-\cup D}[n].
\end{equation}

\begin{lem}\label{constant sheaf to relative homology}
%$R^n\bD p_*j_{\cup D,!}\bC|_{U_a-\cup D}$ is relative homology $H_n(U_a,U_a\cap(\cup D))$.
Denote $p:U_a\rightarrow pt$. Then $R^n p_! j_{*}\bC|_{U_a-\cup D}$ is the relative homology\\ $H_n(U_a,U_a\cap(\cup D))$.
\end{lem}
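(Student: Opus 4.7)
The plan is to apply Verdier duality on $U_a$ to convert $R^n p_! = H^n_c(U_a,-)$ with coefficients in $j_*\bC_{U_a-\cup D}$ into an ordinary relative cohomology of the pair $(U_a,U_a\cap\cup D)$, and then invoke the universal coefficient theorem to replace cohomology by homology in the same degree.

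Concretely, write $j:U_a-\cup D\hookrightarrow U_a$ and $i:U_a\cap\cup D\hookrightarrow U_a$ for the complementary open and closed embeddings. Unpacking the definitions,
$$R^n p_!\bigl(j_*\bC|_{U_a-\cup D}\bigr)=H^n_c\bigl(U_a,\,Rj_*\bC_{U_a-\cup D}\bigr).$$
Since $U_a-\cup D$ is a smooth complex algebraic variety of complex dimension $n$, its Verdier dualizing complex is $\bC[2n]$, and since Verdier duality exchanges $Rj_*$ with $j_!$ for an open embedding,
$$\bD_{U_a}\bigl(Rj_*\bC_{U_a-\cup D}\bigr)\;\simeq\; j_!\,\bC_{U_a-\cup D}[2n].$$
Global Verdier duality then yields
$$H^n_c\bigl(U_a,\,Rj_*\bC_{U_a-\cup D}\bigr)^{\vee}\;\simeq\; H^{-n}\bigl(U_a,\,j_!\bC_{U_a-\cup D}[2n]\bigr)\;=\;H^n\bigl(U_a,\,j_!\bC_{U_a-\cup D}\bigr),$$
and the open/closed distinguished triangle $j_!\bC_{U_a-\cup D}\to\bC_{U_a}\to i_*\bC_{U_a\cap\cup D}$ identifies the right-hand side with the relative singular cohomology $H^n(U_a,U_a\cap\cup D;\bC)$. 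Dualizing once more via the universal coefficient theorem over $\bC$ produces the desired relative homology.

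The main subtlety is merely bookkeeping of degrees: the shift $[2n]$ from the real dimension of $U_a$ cancels the sign flip in Verdier duality between $H^n_c$ and $H^{-n}$, landing exactly in degree $n$. There is no serious obstacle beyond checking that every sheaf in sight is constructible and that $U_a$, $U_a\cap\cup D$ are semialgebraic with finite-dimensional singular (co)homology, both of which are automatic since everything is algebraic. One could alternatively argue by Poincar\'e--Lefschetz duality for the ``manifold-with-boundary'' pair $(U_a,U_a\cap\cup D)$, but the sheaf-theoretic route meshes naturally with \eqref{F restriction to constant sheaf} and the rest of the paper's $D$-module framework.
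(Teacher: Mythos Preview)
Your proof is correct but takes a slightly more roundabout path than the paper's. The paper argues directly: since $U_a-\cup D$ is smooth of complex dimension $n$, one has $Rj_*\bC_{U_a-\cup D}[2n]=Rj_*\omega_{U_a-\cup D}$, and the attaching triangle
\[
i_!\omega_Z\to\omega_Y\to Rj_*\omega_{Y-Z}\to
\]
for $Y=U_a$, $Z=U_a\cap\cup D$, upon applying $H^{-n}_c$, yields the long exact sequence of the pair in homology, so that $H^{-n}_c(Rj_*\omega_{Y-Z})=H_n(Y,Z)$ straight from the definition $H_i(-)=H^{-i}_c(\omega_{-})$. You instead invoke global Verdier duality to pass to $H^n(U_a,j_!\bC)=H^n(U_a,U_a\cap\cup D;\bC)$ and then dualize back via the universal coefficient theorem. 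Both routes are valid; the paper's avoids the double dualization and the attendant finite-dimensionality check (harmless here, as you note), while yours has the virtue of making the link to relative singular cohomology explicit before passing to homology.
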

\begin{proof}
Let $Y$ be a variety and $\omega_Y$ be the dualizing sheaf in constructive setting, so for $Y$ smooth, $\omega_Y=\bC[2\dim Y]$. Then $H_i(Y)=H^{-i}_c(\omega_Y)$.
Now if
$i: Z\subset Y$ is a closed subset and let $j:Y-Z\to Y$ be the complement, then we have
$$i_!\omega_Z\to \omega_Y\to Rj_*\omega_{Y-Z}\to $$
So $H_c^{-i}(Rj_*\omega_{Y-Z})= H_i(Y,Z)$.
Note that in our setting, $Y=U_a$ is smooth, $Z=U_a\cap (\cup D)$, and $j$ is an affine embedding. So 
$$R^np_!j_{*}\bC|_{U_a-\cup D}=H_c^{-n}(j_*\bC|_{U_a-\cup D}[2n])=H_n(U_a,U_a\cap (\cup D)).$$
\end{proof}

Now combining \eqref{F restriction to constant sheaf}, Lemma  \ref{constant sheaf to relative homology}, and \cite[Thm. 1.7]{HLZ}, we conclude the proof of our main result for $L=\omega_X^{-1}$ and $\beta=\beta_0$:

\begin{thm} (Chain integral solutions) \label{middle relative homology}
For any $a\in V^\vee$, we have canonical isomorphisms
\begin{equation*}
\Hom_{D_{V^\vee}}(\tau,\cO_{V^\vee,a}^{an})\simeq H^0_c(U_a,\cF|_{U_a})\simeq H_n(U_a,U_a\cap(\cup D)).
\end{equation*}
\end{thm}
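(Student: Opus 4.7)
The plan is to assemble the isomorphism as the composition of two pieces that have essentially been prepared in the preceding paragraphs. The first isomorphism $\Hom_{D_{V^\vee}}(\tau,\cO_{V^\vee,a}^{an})\simeq H^0_c(U_a,\cF|_{U_a})$ is not something I would try to reprove; I would invoke it directly from \cite[Thm. 1.7]{HLZ}, which realizes analytic solutions of $\tau$ at a point $a$ as the compactly supported zeroth hypercohomology of the solution sheaf $\cF=\Sol(D_{X,\beta_0})$ restricted to the complement $U_a=X-Y_a$. Since $\beta=\beta_0$ falls within the hypotheses of that theorem in the CY setting, this is essentially a black box.

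The real work is the second isomorphism $H^0_c(U_a,\cF|_{U_a})\simeq H_n(U_a,U_a\cap(\cup D))$, and my plan is to get it by identifying $\cF|_{U_a}$ with a shift of a constant sheaf pushed from the open torus orbit. Specifically, I would start from Corollary \ref{DXbeta}, which says $D_{X,\beta_0}\simeq j_{!}j^{!}D_{X,\beta_0}$, restrict everything to $U_a$ (the restriction respects the open embedding of the open $T$-orbit $\mathring{X}$ into $X$, since $Y_a$ is transverse to the stratification in the generic situation; more precisely one just observes that $j_{\cup D}$ and $j$ commute with $!$-restriction to $U_a$), and then apply the solution functor $\Sol$. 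Using the standard compatibilities $\Sol\circ f_!\simeq f_*\circ\Sol$ and $\Sol\circ f^!\simeq f^*\circ\Sol$, together with the fact that $D_{X,\beta_0}$ becomes the structure sheaf on $X-\cup D$ (so its solution sheaf is $\bC[n]$), I obtain the identification \eqref{F restriction to constant sheaf}, namely $\cF|_{U_a}\simeq j_{\cup D,*}\bC|_{U_a-\cup D}[n]$.

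With this in hand, the computation becomes a purely topological one: $H^0_c(U_a,\cF|_{U_a})=H^n_c(U_a, j_{\cup D,*}\bC|_{U_a-\cup D})=R^np_{!}\, j_{\cup D,*}\bC|_{U_a-\cup D}$, where $p:U_a\to\mathrm{pt}$. Lemma \ref{constant sheaf to relative homology} then identifies this with $H_n(U_a,U_a\cap(\cup D))$ via Verdier duality applied to the cofiber sequence $i_!\omega_Z\to\omega_{U_a}\to Rj_{\cup D,*}\omega_{U_a-\cup D}$ for $Z=U_a\cap(\cup D)$, together with the relations $\omega_{U_a}=\bC[2n]$ and $H_i(Y,Z)=H^{-i}_c(Rj_*\omega_{Y-Z})$.

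The step I expect to require the most care is checking the compatibility of the solution functor with the $!$-pushforward along the open embedding $j_{\cup D}\colon X-\cup D\hookrightarrow X$ after restricting to $U_a$. This rests on Corollary \ref{DXbeta}, whose proof in turn reduces via Verdier duality and cover-by-affines to the key Lemma \ref{!-fiber of DX vanishing} on vanishing of $!$-fibers of the Kummer-type D-module $D_{X,\alpha}$ at boundary divisors under the non-resonance condition $(*)$. Once that input is granted, the final assembly is formal: the three isomorphisms compose to give the chain integral identification stated in the theorem, with the middle term $H^0_c(U_a,\cF|_{U_a})$ serving as the bridge between D-module solutions and relative topology.
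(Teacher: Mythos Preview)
Your proposal is correct and follows essentially the same route as the paper: the first isomorphism is quoted from \cite[Thm.~1.7]{HLZ}, and the second is obtained by combining Corollary~\ref{DXbeta} with the $\Sol$-functor compatibilities to produce \eqref{F restriction to constant sheaf}, and then applying Lemma~\ref{constant sheaf to relative homology}. One small remark: your parenthetical about $Y_a$ being transverse to the stratification ``in the generic situation'' is unnecessary and slightly misleading, since the statement is for \emph{all} $a$; the point is simply that restriction to the open subset $U_a$ commutes with $j_!$, $j^!$, and $\Sol$ on the nose, which you in fact note immediately afterward.
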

This gives a new topological description of the classical solution space of the GKZ system $\tau$ in terms chains in the complements $U_a=X-Y_a$ that are bounded by the canonical divisor $\cup D$.

We note that the composition of the isomorphisms \cite{HLZ}
\begin{equation}\label{composition}
H_n(U_a,U_a\cap(\cup D)){\br\sim\over\ra} H^0_c(U_a,\cF|_{U_a}){\br\sim\over\ra} Hom_{D_{V^\vee}}(H^0\pi^\vee_+\cN,\cO_{V^\vee,a}^{an})
\end{equation}
is given by $C\mapsto (*\mapsto \left<C,*\right>)$, for $C\in H_n(U_a,U_a\cap(\cup D))$, where 
$$\pi_+^\vee\cN=\Omega_{U/V^\vee}^\bullet\otimes(\cO_{V^\vee}\boxtimes D_{X,\beta})[\dim X]|_U$$
and $\left<C,*\right>$ is the pairing between the chain $C$ with top forms.

Composing this with the isomorphism
\begin{equation}
\tau\simeq H^0\pi^\vee_+\cN,~~1\mapsto \frac{\Omega}{f}
\end{equation}
we have get the isomorphism
\begin{equation}
H_n(U_a,U_a\cap\cup D)\ra Hom_{D_{V^\vee}}(\tau,\cO_{V^\vee,a}^{an}),~~C\mapsto\left<C,*\frac{\Omega}{f}\right>_a.
\end{equation}
In particular, the chain $C$ corresponds to the function germ $\left<C,\frac{\Omega}{f}\right>_a$ as a local solution to $\tau$ at $a$. Therefore, the theorem shows that the space of local solution germs of $\tau$ at $a$ is exactly given by the chain integrals $\int_C{\Omega\over f_a}$.

\begin{cor}
For generic $a$, $\dim H_n(U_a,U_a\cap(\cup D))$ is equal to the volume of the polytope generated by the exponents of Laurent monomial basis $x^\mu$ of $V^\vee=\Gamma(X,\omega_X^{-1}).$
\end{cor}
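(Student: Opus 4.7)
The plan is to combine Theorem \ref{middle relative homology} with the classical rank formula for GKZ hypergeometric systems due to Gelfand--Kapranov--Zelevinsky, as extended by Adolphson.

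First, I would use Theorem \ref{middle relative homology} to identify
$$H_n(U_a, U_a \cap (\cup D)) \simeq \Hom_{D_{V^\vee}}(\tau, \cO_{V^\vee,a}^{an})$$
for every $a \in V^\vee$. Since $\tau$ is regular holonomic, its solution sheaf is constructible on $V^\vee$ and locally constant on the complement of a proper Zariski closed subset (the singular/discriminant locus of $\tau$). For a generic $a$ in that open set, the dimension of the stalk on the right equals the holonomic rank of $\tau$. So the problem reduces to computing this rank.

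Next, I would observe that in the present toric setting $\tau$ is precisely the GKZ hypergeometric $D$-module $H_A(\beta_0)$, where $A \subset \ZZ^n$ is the set of exponents of the Laurent monomial basis $\{x^\mu\}$ of $V^\vee = \Gamma(X,\omega_X^{-1})$. By the theorem of Gelfand--Kapranov--Zelevinsky, as extended by Adolphson, for any semi-nonresonant parameter $\beta$ the holonomic rank of $H_A(\beta)$ equals the normalized volume $n! \cdot \mathrm{vol}_n(\mathrm{conv}(A))$, i.e.\ the volume of the polytope generated by $A$ in the normalization where the standard $n$-simplex has volume $1/n!$ of the unit cube (equivalently, normalized so this integer equals $n!$ times Euclidean volume). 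This is the volume referred to in the statement.

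The remaining point, and where the main bookkeeping lies, is to verify that $\beta_0$ is semi-nonresonant for $A$. This reduces to the same local computation already performed in the proof of Corollary \ref{DXbeta}: on each affine toric chart of $X$, the shift $\alpha = -\beta_0 + 1$ used there evaluates to $\alpha(e_i') = 1$ on every ray generator $e_i'$, so Property $(*)$ holds trivially on every chart, and this is precisely Adolphson's semi-nonresonance hypothesis for $\beta_0$. Chaining the two identifications then yields the claimed equality. The only genuine subtlety is matching the normalization of volume in the GKZ/Adolphson rank formula with the one intended in the statement of the corollary; everything else is either a direct appeal to Theorem \ref{middle relative homology} or a citation of the GKZ--Adolphson rank theorem.
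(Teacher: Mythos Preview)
Your proposal is correct and follows essentially the same route as the paper: identify the relative homology with the solution space via Theorem \ref{middle relative homology}, then invoke the GKZ--Adolphson rank formula under semi-nonresonance to equate the generic solution rank with the polytope volume. The paper's own proof is just these two sentences.

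One small caveat: your verification of semi-nonresonance via Property~$(*)$ for the dual parameter $\alpha$ is not quite the same thing. The paper itself remarks just before Proposition~\ref{Kummer local system} that Property~$(*)$ is ``slightly different from the semi-nonresonance condition in \cite{GKZ1990,Adolphson}.'' Property~$(*)$ is a local condition on ray generators used to control the $!$-fibers of $D_{X,\alpha}$, whereas Adolphson's semi-nonresonance is a condition on $\beta$ relative to faces of the cone over $A$. The paper simply asserts that $\beta_0$ is semi-nonresonant without spelling out a check, so you are not omitting anything the paper provides; but if you want to include a verification, you should check Adolphson's actual hypothesis for the parameter $(1,0,\ldots,0)$ rather than appeal to the computation in Corollary~\ref{DXbeta}.
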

\begin{proof}
Since the $\frakt$-character $\beta$ is a semi-nonresonant, the generic rank of the solution sheaf of $\tau$ is given by the volume of the polytope in question \cite{GKZ1990,Adolphson}. Now the corollary follows from \eqref{middle relative homology}.
\end{proof}

\begin{rmk}\hfill
\begin{itemize}
\item Theorem  \ref{middle relative homology} gives a topological interpretation of the GKZ's combinatorial volume formula for generic $a$ for generic rank of $\tau$ \cite{GKZ1990,Adolphson} on the one hand, but it holds for all $a$ on the other hand. The equation can also be viewed as the toric analogue of the statement that for any projective homogeneous variety $X$ \cite{HLZ}
\begin{equation*}
H^0_c(U_a,\cF|_{U_a})\simeq H_n(U_a,\C).
\end{equation*}

\item Under the identification of $\calF$ with $j_*\bC[n]$ above, the hypercohomology group of the perverse sheaf $H^0_c(U_a,\cF|_{U_a})$ in Theorem \ref{middle relative homology} inherits a MHS from the relative homology $H_n(U_a,U_a\cap(\cup D))$, in addition to providing the solution rank at each point $a$. This is anologous to the case of homogeneous varieties, whereby the hypercohomology inherits a mixed Hodge structure \cite{HLZ} from $H_n(U_a)$.

\item We point out that in principle we can carry out an explicit construction of chains in $U_a$ recursively starting from cycles. However, the construction is rather complicated combinatorially.
\end{itemize}
\end{rmk}

We now generalize Theorem \ref{middle relative homology} by replacing $L=\omega_X^{-1}$ with any very ample line bundle on the toric variety $X$, such that $L\otimes\omega_X$ is base point free. Let $\tau=\tau_{VW}$ now be the tautological system defined on $V^\vee\times W^\vee$ as in \cite[\S 6]{HLZ}. Then we have 

\begin{thm}\label{middle relative homology - general type}
For any $(a,b)\in V^\vee\times W^\vee$, we have canonical isomorphisms
\begin{equation*}
\Hom_{D_{V^\vee}}(\tau,\cO_{V^\vee\times W^\vee,(a,b)}^{an})\simeq H^0_c(U_a,\cF|_{U_a})\simeq H_n(U_a,U_a\cap(\cup D)).
\end{equation*}
\end{thm}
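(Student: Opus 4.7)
The plan is to re-run the proof of Theorem \ref{middle relative homology} almost verbatim, with the general-type Riemann--Hilbert correspondence from \cite[\S6]{HLZ} replacing its Calabi--Yau analogue \cite[Thm.~1.7]{HLZ}. The key observation that enables this reduction is that the D-module $D_{X,\beta_0}$, its solution complex $\cF=\Sol(D_{X,\beta_0})$, the open embedding $j:\mathring X\hookrightarrow X$, and hence the two technical inputs Corollary \ref{DXbeta} and Lemma \ref{constant sheaf to relative homology}, all depend only on the toric structure of $X$ and on the trivial character $\beta_0|_{\ft}=0$; they are entirely insensitive to the choice of the very ample line bundle $L$.

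First I would establish the second isomorphism of the theorem. Since $L$ is very ample, $Y_a\subset X$ is still an ample hypersurface, so $U_a=X-Y_a$ is affine and $j_{\cup D}:U_a-\cup D\hookrightarrow U_a$ remains an affine open immersion. Restricting Corollary \ref{DXbeta} to $U_a$ and applying $\Sol$ yields the identification
\[
\cF|_{U_a}\;\simeq\; j_{\cup D,*}\bC|_{U_a-\cup D}[n],
\]
and Lemma \ref{constant sheaf to relative homology} applies word for word to give
\[
H^0_c(U_a,\cF|_{U_a})\;\simeq\; H_n\bigl(U_a,U_a\cap(\cup D)\bigr).
\]
Note that neither isomorphism involves $W^\vee$ or the point $b$; they are inherited directly from the CY argument.

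For the first isomorphism I would invoke the general-type analogue of \cite[Thm.~1.7]{HLZ} given in \cite[\S6]{HLZ}, which realizes $\tau_{VW}$ as $H^0\pi^\vee_+\cN_{VW}$ for an appropriate relative D-module $\cN_{VW}$ on the universal complement $U\subset X\times V^\vee\times W^\vee$. Here $\cN_{VW}$ encodes $D_{X,\beta_0}$ twisted by the universal section $f$ of $L$ (varying over $V^\vee$) and paired with the universal section $g$ of $L\otimes\omega_X$ (varying over $W^\vee$). Taking $\Sol$, applying proper base change, and restricting to the stalk at $(a,b)$ reduces $\Hom_{D_{V^\vee\times W^\vee}}(\tau_{VW},\cO^{an}_{(a,b)})$ to a compactly supported hypercohomology on the fiber $U_a$.

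The hard part will be the last identification: verifying that the $(a,b)$-fiber of $\Sol(\cN_{VW})$ is precisely $\cF|_{U_a}$, independent of $b$. This is where the hypothesis that $L\otimes\omega_X$ be base point free becomes essential, since it guarantees that $g_b$ provides a nowhere-vanishing local trivialization of $\omega_X\otimes L$ near the generic point of every component of $U_a\cup(\cup D)$, making the $W^\vee$-twist cohomologically trivial along the fiber. Once this step is secured, the chain integral correspondence $C\mapsto\int_C g_b/f_a$ emerges exactly as in the CY case, with $\Omega/f_a$ simply replaced by the $(a,b)$-dependent top form $g_b/f_a$ on $U_a$, and the three-term isomorphism of the theorem follows by composition.
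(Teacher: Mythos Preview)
Your approach is essentially identical to the paper's: the paper simply states that ``the proof above carries over to this case verbatim,'' invoking the general-type Riemann--Hilbert result of \cite[\S6]{HLZ} in place of \cite[Thm.~1.7]{HLZ}, exactly as you propose, and noting that the middle and right terms are constant in the $W^\vee$ direction.

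One small correction to your heuristic for the $b$-independence: it is \emph{not} that $g_b$ is nowhere-vanishing (a fixed section of $L\otimes\omega_X$ will generally vanish along a divisor, base-point-freeness notwithstanding). Rather, the structure of $\tau_{VW}$ in \cite[\S6]{HLZ} forces all solutions to be \emph{linear} in $b$, so the stalk of the solution sheaf at $(a,b)$ is canonically independent of $b$; correspondingly the chain integral is $\int_C g_b\Omega/f_a$ (not $\int_C g_b/f_a$), linear in $b$ as the paper remarks. The base-point-free hypothesis enters elsewhere in \cite{HLZ}, not in trivializing the $b$-fiber.
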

Note that the middle and the right side are both constant in the $W^\vee$ direction. The analogue of the chain integral map \eqref{middle relative homology} now becomes
$$
C\mapsto\int_C {g_b\Omega\over f_a}
$$
extending the cycle-to-period map of \cite{HLZ}. Note that the chain integrals are linear in $b$. The proof above carries over to this case verbatim.

\section{Concluding remarks}\label{concluding remarks}

We now comment on how the chain integral map might fail to be isomorphic. Let $X$ be a smooth complete toric variety of $\dim=n$, with the action of the torus $T$, and $G$ be an algebraic group acting on $X$ so that $T\subset G\subset Aut(X)$. Denote the corrsponding $\tau$ by $\tau^G$, which we shall study for various $G$. Note that the $G$-action on $X$ induces a stratification of $X$ by $G$-orbits, and denote $\cup D^G$ to be the union of $codim>0$ strata. We have a natural map

\begin{equation}\label{G cycle to T cycle}
r^G: H_n(U_a,U_a\cap(\cup D^G))\to H_n(U_a,U_a\cap(\cup D))
\end{equation}

Clearly, under the chain integral map \eqref{chain integral map}, $Ker (r^G)$ maps to 0. We now prove the following.

\begin{thm}\label{middle group}
The chain integral map induces an isomorphism
\begin{equation}
 H_n(U_a,U_a\cap(\cup D^G))/Ker (r^G) \simeq \Hom_{D_{V^\vee}}(\tau^G,\cO_{V^\vee,a}^{an})
\end{equation}
\end{thm}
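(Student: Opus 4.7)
The plan is to reduce to Theorem \ref{middle relative homology} by passing through the natural surjection $\tau^T\twoheadrightarrow\tau^G$ induced by the inclusion of relations $\ft\subset\fg$, which gives an injection $\Hom(\tau^G,\cO_{V^\vee,a}^{an})\hookrightarrow\Hom(\tau^T,\cO_{V^\vee,a}^{an})\simeq H_n(U_a,U_a\cap\cup D)$ identifying $\Hom(\tau^G,\cO)$ with the subspace of solutions annihilated by the extra operators $Z(\xi)+\beta(\xi)$ for $\xi\in\fg/\ft$, i.e.\ the $G_0$-invariant solutions of $\tau^T$. Under this identification, the theorem amounts to the equality $\Hom(\tau^G,\cO)=\mathrm{Im}(r^G)$ as subspaces of $H_n(U_a,U_a\cap\cup D)$.

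For the inclusion $\mathrm{Im}(r^G)\subseteq\Hom(\tau^G,\cO)$, I would adapt the sweep argument from the chain integral proposition of \S\ref{general} from $x\in\ft$ to arbitrary $x\in\fg$: for $C$ with $\partial C\subset\cup D^G$, the sweep $C'_\varepsilon=\{e^{tx}c:c\in\partial C,\ t\in[0,\varepsilon]\}$ still lies in $\cup D^G$ since the latter is a union of $G$-orbits, and $\Omega$ pulls back to zero on $\cup D^G$ because $\cup D^G\subseteq\cup D$ and $\Omega|_{\cup D}=0$. Hence $\int_C\Omega/f_a$ is $G_0$-invariant, and combined with Theorem \ref{middle relative homology} the chain integral map factors as
\[H_n(U_a,U_a\cap\cup D^G)\xrightarrow{r^G}H_n(U_a,U_a\cap\cup D)\xrightarrow{\sim}\Hom(\tau^T,\cO)\hookleftarrow\Hom(\tau^G,\cO),\]
yielding an injective map $\iota:H_n(U_a,U_a\cap\cup D^G)/\ker r^G\hookrightarrow\Hom(\tau^G,\cO_{V^\vee,a}^{an})$ as claimed.

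The main obstacle is surjectivity of $\iota$, equivalently $\Hom(\tau^G,\cO)\subseteq\mathrm{Im}(r^G)$. My plan is to use the Riemann--Hilbert-type identification $\Hom(\tau^G,\cO_{V^\vee,a}^{an})\simeq H^0_c(U_a,\cF^G|_{U_a})$ (from Thm 1.7 of \cite{HLZ}) with $\cF^G=\Sol(D^G_{X,\beta_0})$, together with the inclusion $\cF^G\hookrightarrow\cF^T$ of perverse sheaves dual to the surjection $D^T_{X,\beta_0}\twoheadrightarrow D^G_{X,\beta_0}$. The key step is to extend Proposition \ref{Kummer local system} and Corollary \ref{DXbeta} from the $T$-stratification to the $G$-stratification, producing a description of $\cF^G$ adapted to $\cup D^G$. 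Granting such a description, the commutative square
\[\begin{array}{ccc}H_n(U_a,U_a\cap\cup D^G)&\xrightarrow{r^G}&H_n(U_a,U_a\cap\cup D)\\ \downarrow&&\parallel\\ H^0_c(U_a,\cF^G|_{U_a})&\hookrightarrow&H^0_c(U_a,\cF^T|_{U_a})\end{array}\]
forces the image of $H^0_c(U_a,\cF^G|_{U_a})$ in $H_n(U_a,U_a\cap\cup D)$ to coincide with $\mathrm{Im}(r^G)$, giving surjectivity.

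The principal technical challenge is the $G$-analog of the $!$-fiber vanishing Lemma \ref{!-fiber of DX vanishing}: $i^!_{D^G}D^G_{X,\beta_0}=0$ for a positive-codimension $G$-stratum $D^G$. In the toric case, the commuting normal vector fields reduce the vanishing to the semi-nonresonance condition (*) via a direct Weyl-algebra normal-form calculation. For a $G$-stratum, the normal directions are controlled by the possibly non-abelian quotient $\fg/\fg_{D^G}$ of $\fg$ by the Lie algebra of the stabilizer of $D^G$, and one must devise a noncommutative refinement of the normal-form argument---likely exploiting the reductive structure of the quotient acting transitively on the transverse slice---to obtain the analogous vanishing. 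This step, whose difficulty reflects the richer geometry of $G$-orbit stratifications compared with the purely abelian toric case, is where I expect the bulk of the work to lie.
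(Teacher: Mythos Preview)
Your injectivity argument is fine and parallels the paper's setup. The issue is your plan for surjectivity: you propose to prove a $G$-analog of Lemma~\ref{!-fiber of DX vanishing}, namely $i^!_{D^G}D^G_{X,\beta_0}=0$ along every positive-codimension $G$-stratum, so that $D^G_{X,\beta_0}\simeq j^G_!\cO_{X-\cup D^G}$ and hence $H^0_c(U_a,\cF^G|_{U_a})\simeq H_n(U_a,U_a\cap\cup D^G)$. But combined with \cite[Thm.~1.7]{HLZ} this would yield $\Hom(\tau^G,\cO)\simeq H_n(U_a,U_a\cap\cup D^G)$ \emph{without} the quotient by $\ker r^G$ appearing in the statement. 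So either $\ker r^G$ vanishes identically (in which case the theorem is understated), or your target lemma is false for some $G$; in either case you are aiming at a harder and possibly wrong intermediate result, and the ``noncommutative refinement of the normal-form argument'' you anticipate may not exist.

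The paper avoids this completely. Rather than proving that the adjunction morphism $j^G_!\cO_{X-\cup D^G}\to D^G_{X,\beta}$ is an isomorphism, it only uses its \emph{existence} (which needs nothing more than $D^G_{X,\beta}|_{X-\cup D^G}\simeq\cO$). Applying $\Sol$ and $R^0p_!$ produces a map $f_0^G:H^0_c(U_a,\cF^G|_{U_a})\to H_n(U_a,U_a\cap\cup D^G)$ in the direction \emph{opposite} to your vertical arrow. One then writes down a commutative ladder whose rows are
\[H_n(U_a,U_a\cap\cup D^\bullet)/\ker r^\bullet\xrightarrow{f_1^\bullet}\Hom(\tau^\bullet,\cO)\xrightarrow{\phi^\bullet}H^0_c(U_a,\cF^\bullet)\xrightarrow{f_0^\bullet}H_n(U_a,U_a\cap\cup D^\bullet)/\ker r^\bullet\]
for $\bullet\in\{G,T\}$, connected by the injections $r^G$ and $i:\Hom(\tau^G,\cO)\hookrightarrow\Hom(\tau^T,\cO)$. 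The bottom ($T$) row is already known: $f_0\phi f_1=\Id$ and $f_1 f_0\phi=\Id$ by Theorem~\ref{middle relative homology}. A one-line diagram chase using only the injectivity of $r^G$ and $i$ then forces $f_0^G\phi^G f_1^G=\Id$ and $f_1^G f_0^G\phi^G=\Id$ on the top row. No analysis of $D^G_{X,\beta}$ along the $G$-boundary strata is required at all.
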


\begin{proof}
 First note that there is the map $j_!\calO_{X-\cup D^G}\to D_{X,\beta}$ adjoint to $\calO_{X-\cup D^G}\simeq D_{X,\beta}|_{X-\cup D^G}$. Restricting to $U_a$ and taking $\Sol$ and $R^0p!$, it gives rise to a map
\[ H^0_c(U_a,\cF|_{U_a})\ra H_n(U_a,U_a\cap(\cup D^G))\]
which when $G=T$, is inverse to the first map in \ref{composition}, by corollary \ref{DXbeta}. One checks readily that the following diagram commutes:

$$
%\scalebox{0.7}{
\begin{CD}
 H_n(U_a,U_a\cap(\cup D^G))/Ker (r^G)@>f_1^G>>\Hom_{D_{V^\vee}}(\tau^G,\cO_{V^\vee,a}^{an})@>\phi^G>>H^0_c(U_a,\cF^G|_{U_a})@>f_0^G>>\\%H_n(U_a,U_a\cap(\cup D^G))/Ker (r^G)@>f_1^G>>\Hom_{D_{V^\vee}}(\tau^G,\cO_{V^\vee,a}^{an})\\
@V r^G VV@V i VV@V g VV \\ %@V r^G VV@V i VV\\
 H_n(U_a,U_a\cap(\cup D))@>f_1>>\Hom_{D_{V^\vee}}(\tau,\cO_{V^\vee,a}^{an})@>\phi>>H^0_c(U_a,\cF|_{U_a})@>f_0>>\\%H_n(U_a,U_a\cap(\cup D))@>f_1>>\Hom_{D_{V^\vee}}(\tau,\cO_{V^\vee,a}^{an})
\end{CD}
%}
$$
$$
%\scalebox{0.7}{
\begin{CD}
% H_n(U_a,U_a\cap(\cup D^G))/Ker (r^G)@>f_1^G>>\Hom_{D_{V^\vee}}(\tau^G,\cO_{V^\vee,a}^{an})@>\phi^G>>H^0_c(U_a,\cF^G|_{U_a})@>f_0^G>>
H_n(U_a,U_a\cap(\cup D^G))/Ker (r^G)@>f_1^G>>\Hom_{D_{V^\vee}}(\tau^G,\cO_{V^\vee,a}^{an})\\
%@V r^G VV@V i VV@V g VV
@V r^G VV@V i VV\\
 %H_n(U_a,U_a\cap(\cup D))@>f_1>>\Hom_{D_{V^\vee}}(\tau,\cO_{V^\vee,a}^{an})@>\phi>>H^0_c(U_a,\cF|_{U_a})@>f_0>>
 H_n(U_a,U_a\cap(\cup D))@>f_1>>\Hom_{D_{V^\vee}}(\tau,\cO_{V^\vee,a}^{an})
\end{CD}
%}
$$
where $f_1$ is the  chain integral map \eqref{chain integral map}, $f_0$ is the map coming from homological algebra at the beginning of the proof, $i$ is the obvious embedding, and $\phi$ is the canonical isomorphism as in \cite{HLZ}. The third square commutes due to the naturalness of adjoint functors.

The second row are all isomorphisms, and $f_0\phi f_1=Id$, $f_1f_0\phi=Id$, by what we have proved for $T$. Since $r^G$ and $i$ in the diagram are both injective, we deduce that $f_0^G\phi^Gf_1^G=Id$, and $f_1^Gf_0^G\phi^G=Id$, and therefore the first row are also all isomorphisms.
\end{proof}

Next we discuss a few other cases where we understand \eqref{chain integral map} more explicitly.

Case I, suppose $X=G/B$ is a flag variety, and we take the group $B$ in the definition of $\tau$ (therefore $D_{X,\beta}:=D_X\otimes_{U\frakb} k$). Then by the Beilinson-Bernstein localization, we have $D_{X,\beta}=i_{w_0,!}\calO_{X^{w_0}}$, where $i_{w_0}: X^{w_0}\into X$ is the inclusion of the open dense Schubert cell. So in this case, by the same argument as in the toric case, \eqref{chain integral map} is an isomorphism.

\begin{rmk}
Therefore, for $X=G/B$, the same argument as in the proof of Theorem \ref{middle group}, where one substitutes the toric $X$ with $G/B$, and the torus $T$ with $B$, shows that \ref{middle group} holds for $X$ and any parabolic subgroup $P$.

%In particular, for $P$ such that $D$ is not a divisor, we know from the remark after Theorem \ref{middle group}, that \eqref{chain %integral map} is an isomorphism, and solutions of $\tau$ are all period integrals.
\end{rmk}

Case II, again take $X=G/B$, but take the maximal unipotent subgroup $N$ instead of $B$, then $D_{X,\beta}:=D_X\otimes_{U\frakn} k$ under Beilinson-Bernstein becomes a direct sum of Verma modules, of highest weights $-w(\rho)-\rho$, indexed by $w\in W$ the Weyl group of $G$, where $\rho$ is half the sum of positive roots. In other words, $D_{X,\beta}$ is isomorphic to the direct sum of $i_{w,!}\calO_{X^w}$, indexed by the Schubert cells $X^w$. So in this case, \eqref{chain integral map} is injective but not surjective in general. The extra solutions of $\tau$ come from lower dimensional "chain integral maps", associated with Schubert cells of higher codimensions.

As an explicit example of this case, take $X=\bP^1$ with homogeneous coordinates $[x:y]$, and let the unipotent subgroup $N$ be the 1-dimensional translation group, which leaves invariant $\infty=[1:0]$. Take a generic $a=a_1x^2+a_0xy+a_2y^2\in\Gamma(X,\cO(2))$. The extra solution of $\tau$ at $a$, that lies outside the chain integral map \eqref{chain integral map}, is the pairing of the 0-form on $x^2/a$ on $U_a$ with the zero cycle supported at $\infty$, which evaluates to $1/a_1$.

In connection to the chain integral map, we mention an old conjecture which seems intricately linked to it. In 1996, inspired by mirror symmetry, Hosono-Lian-Yau found a general combinatorial formula that gives a complete set of solutions to $\tau$ in the toric case. Their formula is a renormalized form of the formal GKZ Gamma series solution \cite{GKZ1990}. In fact, the formula gives an explicit {\it cohomology valued function \cite[eqn. (3.5)]{HLY1994}
\begin{equation}\label{B-series}
B_X:\calU_\infty\ra H^*(X^\vee,\C)
\end{equation}
such that the classical solution sheaf $\Hom_{D_{V^\vee}}(\tau,\cO_{V^\vee,a}^{an})$ is precisely generated by the functions $\int_\alpha B_X$ ($\alpha\in H_*(X^\vee,\Z)$).} Here $\calU_\infty$ is a neighborhood of the point $f_\infty=\zeta_1\cdots\zeta_p\in \Gamma(X,\calL)$ (which is a so-called large complex structure limit of the universal family $\cY$); the $\zeta_i=0$ are the defining equations of the irreducible $T$-invariant divisors in $X$. The space $X^\vee$ is a toric variety mirrored to $X$ in the sense of Batyrev. In addition as shown in \cite{HLY1994}, the fact that $B_X$ generates the solution sheaf of $\tau$ holds under the much weaker assumption that $X$ is semi-Fano toric. (There has also been a generalization of this solution formula recently to certain noncompact toric varieties by \cite{LLY2010} in the context of open string theory.) 

More importantly, based on an abundance of numerical evidence, it was conjectured that the period sheaf of the universal family of $\cY$ is generated precisely  by the functions 
$$
\int_\alpha B_X\cup [\cup D^\vee],\hskip.2in \alpha\in H_*(X^\vee,\Z)
$$
where $[\cup D^\vee]$ denotes the Poincar\'e dual of the canonical divisor $\cup D^\vee$ in $X^\vee$. This is the so-called {\it hyperplane conjecture}, which remains open. Note that hyperplane sections of $\cO_{X^\vee}(\cup D^\vee)$ are nothing but CY varieties mirrored to the CY varieties $Y_a$ in the family $\cY$.

At least intuitively, the parallel between the hyperplane and the chain integral conjectures seems striking. The statement \eqref{B-series} about $B_X$ above is clearly a combinatorial counterpart of the topological statement Theorem \ref{middle relative homology}. Under this dictionary, the hyperplane conjecture says that cupping $B_X$ with the (mirror anticanonical) class $[\cup D^\vee]$ corresponds to taking the subgroup of vanishing homology $H_{n-1}(Y_a)_{van}\into H_n(U_a,U_a\cap(\cup D))$, given by the `tube-over-cycle' map $\calT$. Note that the period sheaf of $\cY$ is generated precisely by the period integrals $\int_\gamma\Res{\Omega\over f_a}=\int_{\calT(\gamma)}{\Omega\over f_a}$.
To put it in another way, the groups $H_n(U_a,U_a\cap(\cup D))$ and $H^*(X^\vee,\C)$ are `mirror' to each other, while taking the subgroup of tubes over the cycles in $H_{n-1}(Y_a)_{van}$ of the CY $Y_a$ in $X$, should be mirror to passing to the quotient group $H^*(X^\vee,\C)/\Ann [\cup D^\vee]$ by the subgroup annihilated by the mirror CY divisor $\cup D^\vee$ in $X^\vee$.
This dictionary suggests a close connection between the hyperplane conjecture and the chain integral isomorphism: that taking tubes over cycles on the topological side may in fact corresponds to cupping with $[\cup D]$ on the cohomological side.

\vfill

\vskip.5in

\noindent\address {\SMALL A. Huang, Department of Mathematics, Harvard University, Cambridge MA 02138. \\
\vskip-.45in  anhuang@math.harvard.edu.}
\vskip-.15in

\noindent\address {\SMALL B.H. Lian, Department of Mathematics, Brandeis University, Waltham MA 02454.\\
\vskip-.45in   lian@brandeis.edu.}
\vskip-.15in

\noindent\address {\SMALL S.-T. Yau, Department of Mathematics, Harvard University, Cambridge MA 02138. \\
\vskip-.45in  yau@math.harvard.edu.}
\vskip-.15in

\noindent\address  {\SMALL X. Zhu, Department of Mathematics, California Institute of Technology, Pasadena CA 91125. \\
\vskip-.45in xzhu@caltech.edu.}

\end{document}